\newtheorem{theorem}{Theorem}%[section]
\newtheorem{corollary}{Corollary}
\newtheorem{lemma}{Lemma}
\newtheorem{proposition}{Proposition}
\newtheorem{problem}{Problem}
\newtheorem{question}{Question}
\theoremstyle{definition}
\newtheorem{definition}{Definition}
\DeclareMathOperator{\Lin}{Lin}
\newcommand{\NN}{\mathbb{N}}
\newcommand{\EE}{\mathcal{E}}
\title[On the weak and pointwise topologies in function spaces]{On the weak and pointwise topologies in function spaces}
\author{Miko\l aj Krupski}
\address{Institute of Mathematics\\ University of Warsaw\\ \newline Ul. Banacha 2\\02--097 Warszawa\\ Poland }
\email{mkrupski@mimuw.edu.pl}
\subjclass[2010]{46E10, 54C35}
\keywords{Function space; pointwise convergence topology; weak topology}
\date{\today}
\thanks{The author was partially supported by the Polish National Science Center research grant UMO-2012/07/N/ST1/03525}
\begin{document}

\begin{abstract}
For a compact space $K$ we denote by $C_w(K)$ ($C_p(K)$) the space of continuous real-valued functions on $K$ endowed with the weak (pointwise)
topology. In this paper we address the following basic question which seems to be open:
\textit{Suppose that $K$ is an infinite (metrizable) compact space. Is it true that $C_w(K)$ and $C_p(K)$ are homeomorphic?}
We show that the answer is ``no", provided $K$ is an infinite compact metrizable $C$-space. In particular our proof works for any
infinite compact metrizable finite-dimensional space $K$.
\end{abstract}

\maketitle

For a compact space $K$ we can consider three natural topologies on the set $C(K)$ of all continuous real-valued functions on $K$:
the norm topology, the weak topology and the pointwise topology. Let us denote $C(K)$ endowed with the latter two topologies by
$C_w(K)$ and $C_p(K)$ respectively. Suppose that $K$ is an uncountable compact space.
Clearly, the space $C(K)$ equipped with the norm topology is homeomorphic neither to $C_w(K)$ nor to $C_p(K)$: indeed, both $C_w(K)$ and $C_p(K)$
are not metrizable whereas the norm defines a metric on $C(K)$.
For a similar reason, if $K$ is a \textit{countable} compact metrizable space, then $C_w(K)$ is not homeomorphic to $C_p(K)$. In that case $C_p(K)$ is
metrizable and $C_w(K)$ is not.
%However,
If we try to compare topologically $C_w(K)$ and $C_p(K)$, for an uncountable compact space $K$, the answer is not obvious at all.
There is a vast literature studying the weak and the pointwise topology in function spaces, but surprisingly it seems to be unknown
whether these two topologies are homeomorphic. More precisely, we can address the following question:
\textit{Let $K$ be an uncountable compact (metrizable) space. Is it true that $C_w(K)$ and $C_p(K)$ are homeomorphic?}
This question seems to be open even for standard uncountable metrizable compacta such as the Cantor space $2^\omega$ or the unit interval $[0,1]$.
%\begin{question}\label{question}
%Is it true that $C_w(2^\omega)$ and $C_p(2^\omega)$ are homeomorphic?
%\end{question}
%\begin{question}
%Is it true that $C_w([0,1])$ and $C_p([0,1])$ are homeomorphic? 
%\end{question}

It was proved in \cite{C} (cf. \cite{K}) that if $K$ is a finite-dimensional compact metrizable space
then $C_p(K)$ and $C_p([0,1]^\omega)$ are not homeomorphic. On the other hand the
celebrated Miljutin's theorem \cite{Mi} %(cf. \cite{AK})
asserts that for any two uncountable compact metrizable spaces $K$ and $L$ the spaces $C_w(K)$ and $C_w(L)$
are linearly homeomorphic. The combination of these two results implies immediately that either $C_p(2^\omega)$ is not homeomorphic
to $C_w(2^\omega)$ or $C_p([0,1]^\omega)$ is not homeomorphic to $C_w([0,1]^\omega)$. Similarly,
either $C_p([0,1])$ is not homeomorphic to $C_w([0,1])$ or $C_p([0,1]^\omega)$ is not homeomorphic to $C_w([0,1]^\omega)$, and so on. It is however unclear how
to determine precisely which pairs of spaces are indeed not homeomorphic.

In this short note we show that $C_w(K)$ and $C_p(K)$ are not homeomorphic for any infinite compact metrizable $C$-space $K$ (see Definition \ref{def} below),
in particular, for any infinite
finite-dimensional compact metrizable space $K$. 

Our approach is based on some ideas from \cite{M} (cf. \cite{O} and \cite{K}); however,
to deal with the weak topology on $C(K)$ we consider measures on the compact space $K$ rather than points of that space, as it was done in \cite{M}, \cite{O}, \cite{K}.
One of the key ingredients of the proof is Lemma \ref{lemma} below.

\bigskip
%\section{Preliminaries}

Let $K$ be a compact space.
As usual, we identify the set $C(K)^*$, of all linear functionals on $C(K)$, with
$M(K)$ -- the set of all signed Radon measures on $K$ of finite variation. Using this identification we can equip $M(K)$ with the weak* topology.
For $y\in K$ we denote by $\delta_y\in M(K)$ the corresponding Dirac measure. It is well-known that $K$ can be identified as the subspace
$\{\delta_y:y\in K\}\subseteq M(K)$. If $A\subseteq M(K)$ then $\Lin(A)$ is the linear space spanned by $A$, i.e. the minimal linear subspace of $M(K)$ containing
$A$.

We denote by $\omega$ the set of all non-negative integers, and $\NN=\omega\setminus\{0\}$.
For a natural number $k$ we denote by $[K]^{\leq k}$ ($[K]^{<\omega}$) the hyperspace of all at most $k$-element subsets of $K$ (all finite subsets of $K$)
equipped with the Vietoris topology.

Recall that sets of the form
$$O_K(F;\tfrac{1}{m})=\{f\in C_p(K):\forall x\in F\;\;\lvert f(x) \rvert<\tfrac{1}{m}\},$$
where $F\in[K]^{<\omega}$ and $m\in \NN$, are basic open neighborhoods of the function equal to zero on $K$ in $C_p(K)$.

Similarly, if $F$ is a finite subset of $M(L)$ and $n\in \NN$, then 
$$W_L(F;\tfrac{1}{n})=\{f\in C_w(L):\forall \mu\in F\;\;\lvert \mu(f) \rvert<\tfrac{1}{n}\}$$
is a basic open neighborhood of the function equal to zero on $L$ in $C_w(L)$.
If $F=\{x\}$ or $F=\{\mu\}$ we will write $O_K(x;\tfrac{1}{m})$, $W_L(\mu;\tfrac{1}{m})$ rather than $O_K(\{x\};\tfrac{1}{m})$, $W_L(\{\mu\};\tfrac{1}{m})$.

For $\mu\in M(L)$ and $n\in \NN$ we put
$$\overline{W}_L(\mu;\tfrac{1}{n})=\{f\in C_w(L): \lvert \mu(f) \rvert\leq \tfrac{1}{n}\}.$$
The mappings
\begin{align*}
 &\pi_L:[K]^{\leq k}\times L\to L\\
 &\pi_K:[K]^{\leq k}\times L\to [K]^{\leq k}
\end{align*}
are projections on $L$ and $[K]^{\leq k}$, respectively.

Similarly as in \cite{K}, for a fixed homeomorphism $\Phi:C_p(K)\to C_w(L)$ taking the zero function on $K$ to the zero function on $L$ and for %???
$k,m,n\in \NN$, we define the following sets:
\begin{align*}
Z_{k,m,n}&=\{(E,y)\in [K]^{\leq k}\times L\;:\;\Phi(O_K(E;\tfrac{1}{m}))\subseteq \overline{W}_L(\delta_y;\tfrac{1}{n})\},\\
C(k,m,n)&=\pi_{L}(Z_{k,m,n}).
\end{align*}
The following proposition is easy to verify.
\begin{proposition}
The set $Z_{k,m,n}$ is a closed in $[K]^{\leq k}\times L$, for any $k,m,n\in\NN$.
\end{proposition}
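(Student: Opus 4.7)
The goal is to verify sequential closedness (metrizability of $[K]^{\leq k}\times L$ makes this equivalent to closedness): suppose $(E_j,y_j)\to (E,y)$ with each $(E_j,y_j)\in Z_{k,m,n}$, and show $(E,y)\in Z_{k,m,n}$. By definition, this amounts to proving that every $f\in O_K(E;\tfrac{1}{m})$ satisfies $|\Phi(f)(y)|\leq \tfrac{1}{n}$.

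The first step is to pass from the strict inequalities defining $O_K(E;\tfrac1m)$ to an open neighborhood in $K$. Fix $f\in C(K)$ with $|f(x)|<\tfrac{1}{m}$ for every $x\in E$. Since $E$ is finite and $f$ is continuous, the set $U=\{x\in K:|f(x)|<\tfrac{1}{m}\}$ is open and contains $E$. A defining property of the Vietoris topology on $[K]^{\leq k}$ is that convergence $E_j\to E$ forces $E_j\subseteq U$ for all sufficiently large $j$. Hence $f\in O_K(E_j;\tfrac{1}{m})$ eventually, which by hypothesis gives $\Phi(f)\in \overline{W}_L(\delta_{y_j};\tfrac{1}{n})$, i.e.\ $|\Phi(f)(y_j)|\leq \tfrac{1}{n}$.

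The second step is to transfer this inequality to the limit point $y$. Since $\Phi(f)$ is a continuous real-valued function on $L$ and $y_j\to y$, we obtain $|\Phi(f)(y)|=\lim_j|\Phi(f)(y_j)|\leq \tfrac{1}{n}$, so $\Phi(f)\in \overline{W}_L(\delta_y;\tfrac{1}{n})$. Since $f\in O_K(E;\tfrac{1}{m})$ was arbitrary, this shows $(E,y)\in Z_{k,m,n}$.

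There is no real obstacle here: the proof hinges only on (i) replacing the strict condition $|f|<\tfrac{1}{m}$ on the finite set $E$ by an open neighborhood in $K$, which is possible because the inequality is strict and $E$ is finite, and (ii) the fact that the closed condition $|\Phi(f)(\cdot)|\leq \tfrac{1}{n}$ is preserved under pointwise limits in $L$. The use of the closed set $\overline{W}_L(\delta_y;\tfrac{1}{n})$ rather than the open $W_L(\delta_y;\tfrac{1}{n})$ is precisely what makes step (ii) work, and is the reason the proposition is formulated with the closed variant.
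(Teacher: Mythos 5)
Your argument is correct and rests on the same two observations as the paper's proof --- that $f^{-1}(-\tfrac{1}{m},\tfrac{1}{m})$ is an open set containing the finite set $E$ (so Vietoris-close members of $[K]^{\leq k}$ lie inside it) and that $z\mapsto|\Phi(f)(z)|$ is continuous on $L$ --- the only difference being that the paper exhibits an open neighborhood of a point of the complement while you verify sequential closedness. Note that your reduction to sequences needs $[K]^{\leq k}\times L$ to be metrizable (true in the paper's applications but not assumed for the general compact $K$ of the statement); replacing sequences by nets, or dualizing to the paper's ``complement is open'' formulation, removes this caveat without changing anything essential.
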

\begin{proof}
If $(E,y)\in ([K]^{\leq k}\times L) \setminus Z_{k,m,n}$, then there is $f\in C_p(K)$ such that
$\{f(x):x\in E\}\subseteq (-\tfrac{1}{m}, \tfrac{1}{m})$ and $|\delta_{y}(\Phi(f))|=|\Phi(f)(y)|>\tfrac{1}{n}$.
Obviously, the set
$$\{F\in [K]^{\leq k}:F\subseteq f^{-1}(-\tfrac{1}{m}, \tfrac{1}{m})\} \times \{z\in Y: |\Phi(f)(z)|>\tfrac{1}{n}\}$$
is an open neighborhood of $(E,y)$ in $[K]^{\leq k}\times L$, disjoint from $Z_{k,m,n}$.
\end{proof}
It follows that
$C(k,m,n)$ is closed in $L$ (being a continuous image of a compact set).   
Note that by the continuity of $\Phi$ we have $L=\bigcup_{k,m}C(k,m,n)$. Now, for $m,n,k\in \NN$, we put
$$E(1,m,n)=C(1,m,n)\text { and } E(k,m,n)=C(k,m,n)\setminus C(k-1,m,n), \text { for } k>1.$$
Clearly,
\begin{align}\label{e1}
L=\bigcup_{k,m}E(k,m,n).
\end{align}
For $y\in E(k,m,n)$, let us put
$$\EE(y,m,n)=\pi_K(\pi_L^{-1}(y)\cap Z_{k,m,n}),$$
i.e. $\EE(y,m,n)$ is the family of all exactly $k$-element subsets $E\subseteq K$ satisfying
$\Phi(O_K(E;\tfrac{1}{m}))\subseteq \overline{W}_L(\delta_y;\tfrac{1}{n})$ (this follows from the assumption $y\in E(k,m,n)$).
It is known that for any $y\in E(k,m,n)$ the family $\EE(y,m,n)$ is finite, cf. \cite[Lemma 6.11.9]{vM}. 
Finally, let $\alpha_{m,n}(y)=\bigcup \EE(y,m,n)$, for $y\in E(k,m,n)$.

The following theorem was proved in \cite{M} (cf. \cite[Lemmas 6.11.10, 6.11.1]{vM})
\begin{theorem}\label{partition}
For any $k,m,n\in\NN$ the set $E(k,m,n)$ can be covered by countably many $G_\delta$ (in $L$)
sets $G_r$ such that for each $r\in\mathbb{N}$, there are continuous mappings $f^r_i:G_r\rightarrow K$, $i=1,\ldots,p_r$, such that
$\alpha_{m,1}(y)=\{f^r_1(y),\ldots,f^r_{p_r}(y)\}$
for $y\in G_r$.
\end{theorem}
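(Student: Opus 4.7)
The plan is to view $\alpha := \alpha_{m,1}$ as an upper semicontinuous, finite-valued correspondence from $L$ into $K$ arising from a closed finite-to-one projection, and then to apply the classical decomposition of such correspondences into finitely many continuous single-valued branches on a countable $G_\delta$ cover.

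First, I would set up the correspondence. Since $\overline{W}_L(\delta_y;1/n) \subseteq \overline{W}_L(\delta_y;1)$ for $n \in \NN$, one has $E(k,m,n) \subseteq C(k,m,1) = \bigsqcup_{k' \leq k} E(k',m,1)$, so each $y \in E(k,m,n)$ belongs to a unique $E(k',m,1)$ and the family $\EE(y,m,1) = \pi_K(\pi_L^{-1}(y) \cap Z_{k',m,1})$ consists of exactly $k'$-element subsets of $K$. By the preceding proposition $Z_{k',m,1}$ is closed in the compact metrizable space $[K]^{\leq k'} \times L$, so the restriction $\pi_L|_{Z_{k',m,1}}$ is a closed continuous finite-to-one map between compact metric spaces, and $y \mapsto \EE(y,m,1)$ (and thus $\alpha$) is upper semicontinuous with compact values.

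Next, I would use the closed-map structure to build a $G_\delta$ stratification. For a continuous closed map $\pi$ between metrizable compacta, the sublevel set $\{y : |\pi^{-1}(y)| \leq q\}$ is $G_\delta$: for fixed $q$ and $j \in \NN$, closedness of $\pi$ makes the set of $y$ with $\pi^{-1}(y)$ coverable by $q$ basic open sets of diameter less than $1/j$ open, and intersecting over $j$ yields $G_\delta$. Combined with the fact that each $E(k',m,1) = C(k',m,1) \setminus C(k'-1,m,1)$ is a difference of closed sets in a metrizable space and hence $G_\delta$, this furnishes a countable cover of $E(k,m,n)$ by $G_\delta$ subsets of $L$ on which $|\EE(y,m,1)|$, and therefore $|\alpha(y)|$, is uniformly bounded.

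Finally, on each such $G_\delta$ piece, the continuous selections arise from a local separation argument: around a point $y_0$, disjoint open neighborhoods $U_1,\ldots,U_p \subseteq K$ of the $p = |\alpha(y_0)|$ points of $\alpha(y_0)$, together with upper semicontinuity of $\alpha$, yield a neighborhood $V$ of $y_0$ with $\alpha(V) \subseteq U_1 \cup \cdots \cup U_p$; on the further $G_\delta$ refinement of $V$ on which each $U_i$ meets $\alpha(y)$ in exactly one point, setting $f_i(y)$ to that unique point produces $p$ continuous local selections. A countable subfamily drawn from second countability of $L$ extracts the desired $G_r$ and $f_i^r$. I expect the main technical obstacle to be this last step, since upper semicontinuity alone is insufficient in general (an upper semicontinuous correspondence may lose points at limits); what makes the argument go through is the closed, finite-to-one structure of $\pi_L|_{Z_{k',m,1}}$, which provides the regularity needed to carry out the local decomposition on genuinely $G_\delta$ subsets of $L$.
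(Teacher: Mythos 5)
A preliminary remark: the paper does not prove Theorem \ref{partition} at all --- it quotes it from \cite{M} (cf.\ \cite[Lemmas 6.11.10 and 6.11.1]{vM}) --- so your proposal has to be measured against the standard argument in those sources, and your overall strategy is indeed that standard argument: stratify by the size of the fibers, then split the finite-valued upper semicontinuous map $\alpha_{m,1}$ into continuous single-valued branches. Two of your intermediate claims need repair, though. The assertion that $\pi_L\upharpoonright Z_{k',m,1}$ is finite-to-one is false in general: over a point $y\in C(k'-1,m,1)$ the fiber contains every at most $k'$-element superset of a witnessing smaller set, hence is infinite whenever $K$ is. This happens not to hurt you, because the $G_\delta$-ness of $\{y:\lvert\pi^{-1}(y)\rvert\le q\}$ for a closed map needs no global finite-to-one-ness, and the finiteness of $\EE(y,m,1)$ for $y\in E(k',m,1)$ is the separately cited Lemma 6.11.9 of \cite{vM}; but you should not lean on the claim as stated. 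Likewise, the set on which ``each $U_i$ meets $\alpha(y)$ in exactly one point'' is not a $G_\delta$ refinement as claimed: the condition $\lvert\alpha(y)\cap U_i\rvert\le 1$ is $G_\delta$, but for an \emph{open} $U_i$ the condition $\alpha(y)\cap U_i\neq\emptyset$ is only $F_\sigma$ under upper semicontinuity (its complement $\{y:\alpha(y)\cap U_i=\emptyset\}$ is $G_\delta$, not open). This is repairable --- an $F_\sigma$ intersected with a $G_\delta$ in a metric space is still a countable union of $G_\delta$ sets, which is all the theorem requires --- but it must be said.

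The genuine gap is the final step, which you dispose of with ``a countable subfamily drawn from second countability of $L$.'' The sets $V'$ on which your branches $f_i$ live are not open (they are the non-open refinements of the open sets $V$), so the Lindel\"of property of $L$ does not extract a countable subcover from the family $\{V'(y_0)\}$: a point $y$ of $V\cap E(k,m,n)$ need not belong to the $V'$ built at $y_0$, since some $U_i$ may miss $\alpha(y)$ or meet it twice, and such $y$ is then simply not covered. The fix is to make the construction countable at the source: draw the $U_1,\dots,U_p$ from a fixed countable base of $K$ --- with pairwise disjoint \emph{closures}, since plain disjointness does not give continuity of the $f_i$ (a sequence $f_i(y_n)\in U_i$ can converge to a point of $\overline{U_i}$ lying in some $U_j$) --- and $V$ from a countable base of $L$, and define $V'$ as a function of the tuple $(V,U_1,\dots,U_p)$ alone; every point of $E(k,m,n)$ then lies in one of the countably many resulting sets, each a countable union of $G_\delta$ sets. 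Alternatively, stratify by the value of $\lvert\alpha_{m,1}(y)\rvert$ together with a uniform lower bound $1/j$ on the pairwise distances of its points, which is in effect what \cite{vM} does. Either way, the missing ingredient is a countable parametrization of the local decompositions; without it the argument does not close, and it is this --- not the possible loss of points at limits that you single out --- that is the real obstacle.
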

By \eqref{e1} sets $G_r$ cover the whole space $L$.
Since $\Phi^{-1}:C_w(L)\to C_p(K)$ is continuous, for each $x\in K$ and $m\in \NN$, there is $F_x^m\in [M(L)]^{<\omega}$ and $n\in\NN$ such that
\begin{align}\label{e2}
\Phi^{-1}(W_L(F_x^m;\tfrac{1}{n}))\subseteq O_K(x;\tfrac{1}{m}).
\end{align}

%Interchanging the roles of $K$ and $L$ we can define

%$$Z'_{k,m,n}=\{(E,x)\in [M(L)]^{\leq k}\times K: \Phi^{-1}(W(E;\tfrac{1}{m}))\subseteq \overline{O}(x;\frac{1}{n})\}$$.

We will need the following lemma.

\begin{lemma}\label{lemma}
If $y\in E(k,m,1)$ for some $k,m\in \NN$
then $\delta_y\in \Lin(\bigcup\{F_x^m: x\in \alpha_{m,1}(y)\}).$
\end{lemma}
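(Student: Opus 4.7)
The plan is to argue by contradiction using a weak* duality argument. Suppose that $\delta_y \notin V$, where $V := \Lin(\bigcup\{F_x^m : x \in \alpha_{m,1}(y)\})$. The first observation is that $\alpha_{m,1}(y)$ is a finite subset of $K$: by the cited result $\EE(y,m,1)$ is a finite family of $k$-element sets, so its union is finite. Since each $F_x^m$ is also finite, $V$ is a finite-dimensional linear subspace of $M(L)$, and hence closed in the weak* topology on $M(L)$.

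Next I would invoke the Hahn--Banach separation theorem in the locally convex space $(M(L), w^*)$, combined with the standard identification of weak*-continuous linear functionals on $M(L)$ with elements of $C(L)$ via $\mu \mapsto \mu(g)$. This produces a function $g \in C(L)$ such that $\mu(g) = 0$ for every $\mu \in \bigcup\{F_x^m : x \in \alpha_{m,1}(y)\}$, while $g(y) = \delta_y(g) \neq 0$.

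The key step is then to exploit the scaling freedom in $g$. For any $\lambda > 0$ the function $\lambda g$ still annihilates every $\mu \in F_x^m$, so $\lambda g \in W_L(F_x^m;\tfrac{1}{n})$ for every $x \in \alpha_{m,1}(y)$ and every $n$; applying \eqref{e2} yields $|\Phi^{-1}(\lambda g)(x)| < \tfrac{1}{m}$ for each such $x$. Now choose any $E \in \EE(y,m,1)$; this family is nonempty precisely because $y \in E(k,m,1)$. Since $E \subseteq \alpha_{m,1}(y)$, we get $\Phi^{-1}(\lambda g) \in O_K(E;\tfrac{1}{m})$, and the defining property of $\EE(y,m,1)$ forces $\lambda g \in \overline{W}_L(\delta_y;1)$, i.e.\ $|\lambda g(y)| \leq 1$. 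Letting $\lambda \to \infty$ contradicts $g(y) \neq 0$, so $\delta_y \in V$ after all.

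The only conceptually nontrivial ingredient is the passage from "$\delta_y \notin V$" to the existence of a separating $g \in C(L)$, which rests on the finite-dimensionality (hence weak*-closedness) of $V$ and on the duality $(M(L), w^*)^* = C(L)$; the rest of the argument is a matter of unraveling the definitions and using homogeneity of the relevant basic neighborhoods. I expect no serious obstacle beyond correctly matching the $F_x^m$'s of \eqref{e2} (which a priori depend on $x$ and on an auxiliary $n$) with the chosen $g$, and this is handled automatically by the fact that $g$ lies in the annihilator of every $F_x^m$ simultaneously.
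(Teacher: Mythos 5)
Your proposal is correct and follows essentially the same route as the paper: separate $\delta_y$ from the (finite-dimensional, hence weak*-closed) span by a weak*-continuous functional, identify it with some $g\in C(L)$ annihilating every $F_x^m$, push $g$ through \eqref{e2} into $O_K(E;\tfrac{1}{m})$ for a set $E\in\EE(y,m,1)$, and contradict $\Phi(O_K(E;\tfrac{1}{m}))\subseteq\overline{W}_L(\delta_y;1)$. The only cosmetic difference is that you let $\lambda\to\infty$ where the paper normalizes $g$ once so that $g(y)=2$; your explicit remark that finite-dimensionality guarantees weak*-closedness (needed for the separation theorem) is a detail the paper leaves implicit.
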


\begin{proof}
Suppose that $\delta_y\notin N=\Lin(\bigcup\{F_x^m: x\in \alpha_{m,1}(y)\})$.
By the definition of $\alpha_{m,1}(y)$ there is $A\subseteq\alpha_{m,1}$ such that
\begin{align}\label{e3}
\Phi(O_K(A;\tfrac{1}{m}))\subseteq \overline{W}_L(\delta_y;1).
\end{align}
By our assumption $\delta_y\notin N$ and the separation theorem \cite[Ch. II, 9.2]{S},
there is a linear functional $g:M(L)\to\mathbb{R}$ continuous with respect to the weak* topology in $M(K)$ such that
$$\sup\{g(\mu):\mu\in N\}< g(\delta_y).$$
The weak* continuity of $g$ implies that $g\in C(L)$ (cf. \cite[Theorem 3.16]{F}), i.e. $g(\mu)=\mu(g)$.
Since $N$ is a linear space, scaling $g$ if necessary, we have $g(\delta_y)=\delta_y(g)=g(y)=2$ and $g(\mu)=\mu(g)=0$, for any $\mu \in \bigcup\{F_x^m : x\in A\}$.
By \eqref{e2},
for every $x\in A$, we have $|\Phi^{-1}(g)(x)|<\tfrac{1}{m}$, so $\Phi^{-1}(g)\in O_K(A;\tfrac{1}{m})$. Therefore, by \eqref{e3}
$$g=\Phi(\Phi^{-1}(g))\in \overline{W}_L(\delta_y;1).$$
This means that $|g(y)|=|\delta_y(g)|\leq 1$, a contradiction.
\end{proof}

%We need the following modification of \cite[Lemma 6.11.11]{vM} 
%\begin{lemma}
%Let $y\in E(k,m,1)$ for some $k,m\in \NN$. For $x\in \alpha_{m,1}(y)$ let $k_x,l_x\in \NN$ be such that $x\in F(k_x,l_x,m+1)$.
%Then $\delta_y\in \Lin(\bigcup\{\beta_{l_x,m+1}(x): x\in \alpha_{m,1}(y)\}).$
%\end{lemma}
%\begin{proof}
%Suppose that $y\notin \Lin(\bigcup\{\beta_{l_x,m+1}(x)) : x\in\alpha_{m,1}(y)\})$.
%By the definition of $\alpha_{m,1}(y)$ there is $A\subseteq\alpha_{m,1}$ such that
%$$\Phi(O_K(A;\tfrac{1}{m}))\subseteq \overline{W}_L(\delta_y;1).$$
%Similarly, for every $x\in A$ there is $B_x\subseteq \beta_{l_x,m+1}(x)$ with
%$$\Phi^{-1}(W_L(B_x;\tfrac{1}{l_x}))\subseteq \overline{O}_K(x;\tfrac{1}{m+1}).$$
%By our assumption $y\notin \Lin(\bigcup\{B_x : x\in A\})$ and hence we
%can find \cite{X} $g\in C(K)$ such that $g(y)=2$ and $\mu(g)=0$, for any $\mu \in \bigcup\{B_x : x\in A\}$.
%For every $x\in A$ we have $g\in W_L(B_x;\tfrac{1}{l_x})$, therefore
%$$|\Phi^{-1}(g)(x)|\leq \tfrac{1}{m+1}<\tfrac{1}{m}.$$
%Hence $\Phi^{-1}(g)\in O_K(A;\tfrac{1}{m})$ and consequently
%$$g=\Phi(\Phi^{-1}(g))\in \overline{W}_L(\delta_y;1)$$
%This means that $|g(y)|=|\delta_y(g)|\leq 1$, a contradiction.
%\end{proof}

%\section{The main result}

%In this section we will prove the following.

\begin{definition}\label{def}
A normal space $K$ is called
a \textit{$C$-space} if for any sequence of its open covers $(\mathcal{U}_i)_{i\in\omega}$,
there exists a sequence $(\mathcal{V}_i)_{i\in\omega}$ of families of pairwise disjoint open sets such that $\mathcal{V}_i$ is a refinement of $\mathcal{U}_i$
and $\bigcup_{i\in\omega}\mathcal{V}_i$ is a cover of $K$.
\end{definition}

\begin{definition}\label{def-SID}
A family $\{(A_i, B_i): i\in \omega \}$ of pairs of disjoint closed subsets of a topological space $X$ is
called \textit{essential} if for every family $\{L_i: i \in\omega\}$, where $L_i$ is an arbitrary partition between $A_i$ and $B_i$
for every $i$, we have $\bigcap_{i\in\omega}L_i\neq\emptyset$.
A normal space $X$ is \textit{strongly infinite-dimensional} if it has an infinite essential family of pairs of disjoint closed sets.
\end{definition}

It is well known that any finite-dimensional space, and more generally, any countable-dimensional space
(i.e. a space which is a countable union of finite-dimensional subspaces) is a $C$-space. On the other hand,
a strongly infinite-dimensional space is not a $C$-space. One of the most natural examples of a strongly infinite-dimensional space
is the Hilbert cube $[0,1]^\omega$.

Before we will proceed to the main result of this note, we need to make some preparatory work
concerning strongly infinite-dimensional spaces.
Proposition \ref{SID} and Lemma \ref{lemmaSID} given below are perhaps a part of folklore in the theory of infinite-dimension.
Since we could not find a proper reference in the literature, we shall enclose
a proof here. The reasoning presented below was communicated to the author by R.\ Pol.

\begin{lemma}\label{lemmaSID}
Suppose that $Z$ is a strongly infinite-dimensional compact metrizable space and $Y\subseteq Z$ is $G_\delta$ in $Z$.
Then at least one of the following assertions holds true:
\begin{itemize}
 \item[(a)] $Y$ contains a strongly infinite-dimensional compactum or
 \item[(b)] $Z\setminus Y$ contains a strongly infinite-dimensional compactum.
\end{itemize}
\end{lemma}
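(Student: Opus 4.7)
The plan is to prove the contrapositive: assuming (b) fails, I will show that (a) holds. Since $Y$ is $G_\delta$ in the compact space $Z$, its complement is $F_\sigma$, so I can write $Z\setminus Y = \bigcup_{n\in\omega} F_n$ with each $F_n$ closed in $Z$ (hence compact). If (b) fails, then $Z\setminus Y$ contains no strongly infinite-dimensional compactum; in particular each $F_n$, being a compact subset of $Z\setminus Y$, is weakly infinite-dimensional (i.e.\ fails to have an infinite essential family, which for compact metric spaces is equivalent to the existence of partitions with empty intersection for any prescribed sequence of pairs).

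Next I fix an essential family $\{(A_i,B_i) : i\in\omega\}$ witnessing strong infinite-dimensionality of $Z$, and split $\omega$ into pairwise disjoint infinite subsets $\omega = I_\infty \sqcup \bigsqcup_{n\in\omega} I_n$. For each $n$, I apply weak infinite-dimensionality of $F_n$ to the sequence $\{(A_i\cap F_n, B_i\cap F_n) : i\in I_n\}$ of disjoint closed pairs in $F_n$, obtaining partitions $M^n_i$ of these pairs in $F_n$ with $\bigcap_{i\in I_n} M^n_i = \emptyset$. Using the classical extension theorem for partitions in normal spaces---given a partition of an induced pair in a closed subset, one extends it to a partition of the ambient pair in the whole space---each $M^n_i$ extends to a partition $L_i\subseteq Z$ of $(A_i,B_i)$ in $Z$ satisfying $L_i \cap F_n = M^n_i$.

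Set $K = \bigcap_{i \in \omega\setminus I_\infty} L_i$. Then $K$ is compact, and for every $n$
\[
K \cap F_n \;\subseteq\; \bigcap_{i\in I_n}(L_i \cap F_n) \;=\; \bigcap_{i\in I_n} M^n_i \;=\; \emptyset,
\]
so $K\subseteq Y$. To finish I verify that $\{(A_j\cap K, B_j\cap K) : j\in I_\infty\}$ is essential in $K$, which yields (a). Given partitions $N_j\subseteq K$ of these pairs in $K$, the extension theorem applied to the closed set $K\subseteq Z$ produces partitions $\widetilde L_j$ of $(A_j,B_j)$ in $Z$ with $\widetilde L_j\cap K = N_j$; combining with the $L_i$ for $i\in\omega\setminus I_\infty$ and invoking essentiality of the original family in $Z$ gives
\[
\emptyset \;\neq\; \bigcap_{i\in\omega\setminus I_\infty} L_i \;\cap\; \bigcap_{j\in I_\infty}\widetilde L_j \;=\; K\cap\bigcap_{j\in I_\infty}\widetilde L_j \;=\; \bigcap_{j\in I_\infty} N_j.
\]

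The main obstacle is the joint deployment of two classical tools of dimension theory---the extension theorem for partitions in normal spaces and the associated restriction principle for essential families---in the presence of infinitely many obstructions $F_n$ at once. The decisive organizational trick is the partition of $\omega$ into countably many infinite pieces $I_n$ together with a reserved piece $I_\infty$: one piece $I_n$ is ``spent'' on each $F_n$ to arrange $\bigcap_{i\in I_n} M^n_i=\emptyset$, while $I_\infty$ is left intact so that the restricted essential family continues to witness strong infinite-dimensionality of the final compactum $K\subseteq Y$.
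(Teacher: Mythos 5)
Your proof is correct and takes essentially the same route as the paper: decompose $Z\setminus Y$ into compacta $F_n$, split the essential family into infinitely many infinite blocks, spend one block per $F_n$ to produce partitions in $Z$ whose intersection misses $F_n$, and use the reserved block to certify that the residual compactum lies in $Y$ and is strongly infinite-dimensional (the paper outsources your explicit restriction-and-extension steps to \cite[Corollary 3.1.5]{vM} and phrases the final verification as a contradiction rather than a direct check of essentiality). One tiny inaccuracy: the partition extension theorem gives $L_i\cap F_n\subseteq M^n_i$ and $\widetilde L_j\cap K\subseteq N_j$ rather than equality, but these inclusions are all your argument needs.
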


\begin{proof}
Since $Y\subseteq Z$ is $G_\delta$, we have $Z\setminus Y=\bigcup_{k=1}^{\infty} F_k$ and each $F_k$ is closed in $Z$ (hence compact).
Fix an infinite essential family $\{(A_i,B_i): i\in\omega\}$ of pairs of disjoint closed subsets of $Z$
(witnessing the fact that $Z$ is strongly infinite-dimensional).
Let $\omega=\bigcup_{k=0}^{\infty} N_k$ be a partition of $\omega$ into infinite, pairwise disjoint sets.

%Striving for a contradiction, assume that neither (a) nor 
Assume that (b) does not hold true. In particular, for each $k\geq 1$ the set $F_k$ is not strongly infinite-dimensional and
hence, by \cite[Corollary 3.1.5]{vM} there is a sequence $(L_i)_{i\in N_k}$ of partitions in $Z$ between $(A_i,B_i)_{i\in N_k}$
with $(\bigcap_{i\in N_k}L_i)\cap F_k=\emptyset$.

We claim that $\bigcap_{k=1}^{\infty}\bigcap_{i\in N_k}L_i\subseteq Y$ is strongly infinite-dimensional (and hence (a) holds).

Indeed, otherwise there is a sequence $(L_i)_{i\in N_0}$ of partitions in $Z$ between $(A_i, B_i)_{i\in N_0}$ with
$$(\bigcap_{i\in N_0}L_i)\cap\bigcap_{k=1}^{\infty}\bigcap_{i\in N_k} L_i=\emptyset,$$
which is a contradiction with our assumption that the family $\{(A_i,B_i): i\in \omega\}$ is essential.
\end{proof}

\begin{proposition}\label{SID}
Suppose that $X$ is a strongly infinite-dimensional compact metrizable space. Let $X=\bigcup_{n\in \omega}X_n$, where each $X_n$ is a $G_\delta$
subset of $X$. Then, there is $n\in \omega$ such that $X_n$ contains a strongly infinite-dimensional compactum.
\end{proposition}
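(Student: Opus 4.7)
The plan is to argue by contradiction, iterating Lemma \ref{lemmaSID} to build a descending sequence of strongly infinite-dimensional compacta whose intersection must be empty by the hypothesis $X=\bigcup_n X_n$, yet nonempty by compactness. Suppose, toward a contradiction, that no $X_n$ contains a strongly infinite-dimensional compactum.

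First, I would apply Lemma \ref{lemmaSID} with $Z=X$ and $Y=X_0$. Since case (a) is ruled out by assumption, (b) must hold, giving a strongly infinite-dimensional compactum $Z_1\subseteq X\setminus X_0$. Inductively, assume I have already constructed a strongly infinite-dimensional compactum $Z_n\subseteq X\setminus(X_0\cup\cdots\cup X_{n-1})$. Since $X_n$ is $G_\delta$ in $X$, the set $Z_n\cap X_n$ is $G_\delta$ in $Z_n$, so Lemma \ref{lemmaSID} applied to $Z_n$ and $Y=Z_n\cap X_n$ yields one of two options: either $Z_n\cap X_n$ contains a strongly infinite-dimensional compactum (in which case $X_n$ does too, contradicting our assumption), or $Z_n\setminus X_n$ contains a strongly infinite-dimensional compactum $Z_{n+1}$. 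In the latter case $Z_{n+1}\subseteq Z_n$ and $Z_{n+1}\subseteq X\setminus(X_0\cup\cdots\cup X_n)$, continuing the induction.

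This produces a decreasing sequence $Z_1\supseteq Z_2\supseteq\cdots$ of nonempty compact subsets of $X$ (each $Z_n$ is nonempty because it is strongly infinite-dimensional). By compactness of $X$, the intersection $\bigcap_{n\geq 1}Z_n$ is nonempty. On the other hand, by construction $Z_n\cap X_k=\emptyset$ whenever $k<n$, so
\[
\bigcap_{n\geq 1}Z_n\;\subseteq\;\bigcap_{n\geq 1}\bigl(X\setminus(X_0\cup\cdots\cup X_{n-1})\bigr)\;=\;X\setminus\bigcup_{n\in\omega}X_n\;=\;\emptyset,
\]
a contradiction. The only non-routine step is verifying that at each stage of the induction the dichotomy from Lemma \ref{lemmaSID} is indeed available, which reduces to the observation that $Z_n\cap X_n$ remains a $G_\delta$ subset of the compactum $Z_n$; this is immediate since $G_\delta$-ness is inherited by subspace traces.
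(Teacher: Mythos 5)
Your proposal is correct and follows essentially the same argument as the paper: assume no $X_n$ contains a strongly infinite-dimensional compactum, iterate Lemma \ref{lemmaSID} to produce a decreasing sequence of strongly infinite-dimensional compacta avoiding successively more of the $X_n$, and derive a contradiction from the nonemptiness of their intersection. The only cosmetic difference is your indexing and the explicit remark that $Z_n\cap X_n$ is $G_\delta$ in $Z_n$, which the paper leaves implicit.
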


\begin{proof}
Striving for a contradiction assume that none of $X_n$'s contains a strongly infinite-dimensional compactum. By induction, we construct a decreasing sequence
$F_0\supseteq F_1 \supseteq \ldots \supseteq F_n \supseteq \ldots$ of strongly infinite-dimensional compacta such that, for each $i\in \omega$,
$F_i\subseteq X\setminus X_i$.

For $n=0$ we apply Lemma \ref{lemmaSID} with $Z=X$ and $Y=X_0$. By our assumption (a) does not hold and hence there is a strongly infinite
dimensional compactum $F_0\subseteq X\setminus X_0$.

Assume that, for $n\in \omega$, we already constructed a sequence $F_0\supseteq \ldots \supseteq F_n$ of strongly infinite-dimensional compacta such
that $F_i\subseteq X\setminus X_i$. We apply Lemma \ref{lemmaSID} with $Z=F_n$ and $Y=X_{n+1}\cap F_n$. Again, by our assumption (a) does not hold and consequently
there exists a strongly infinite dimensional compact set $F_{n+1}\subseteq F_n\setminus(X_{n+1}\cap F_n)$. This ends the inductive construction.

Since $(F_n)_{n\in \omega}$ is a decreasing sequence of non-empty compact sets, it has a non-empty intersection $\bigcap_{n\in \omega}F_n$. On the other hand
$\bigcap_{n\in \omega}F_n\subseteq X\setminus \bigcup_{n\in\omega} X_n=\emptyset$, a contradiction.
\end{proof}

Finally, we can prove the following.

\begin{theorem}
If $K$ is a compact metrizable $C$-space, then $C_p(K)$ and $C_w([0,1]^\omega)$ are not homeomorphic.
\end{theorem}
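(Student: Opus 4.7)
The plan is to derive a contradiction from assuming a homeomorphism $\Phi\colon C_p(K)\to C_w(L)$ exists, where $L=[0,1]^\omega$. Composing with a translation in the codomain, I may assume $\Phi$ sends the zero function to the zero function, so the full apparatus of the preamble---in particular Theorem~\ref{partition}, Lemma~\ref{lemma}, the sets $E(k,m,n)$, and the finite witnesses $F^m_x$ of~\eqref{e2}---is available. The overall strategy parallels the argument for the pointwise topology in~\cite{K,M,O}: cover $L$ by countably many nice $G_\delta$ subsets on which Lemma~\ref{lemma} forces the Dirac measures $\delta_y$ into a finite-dimensional subspace of $M(L)$, apply Proposition~\ref{SID} to localize on a strongly infinite-dimensional compactum, and finally embed that compactum into a compact metrizable $C$-space to force a contradiction.

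Combining Theorem~\ref{partition} (applied for every $k,m$ with $n=1$) with~\eqref{e1} yields a countable family $\{G_s\}_{s\in\omega}$ of $G_\delta$ subsets of $L$ that covers $L$, and on each $G_s$ the mapping $\alpha_{m_s,1}$ is realized by finitely many continuous maps $f^s_1,\ldots,f^s_{p_s}\colon G_s\to K$. Since $L$ is strongly infinite-dimensional, Proposition~\ref{SID} gives an index $s$ and a strongly infinite-dimensional compactum $Z\subseteq G_s$. Lemma~\ref{lemma} then says $\delta_y\in\Lin\bigl(\bigcup_{i=1}^{p_s}F^{m_s}_{f^s_i(y)}\bigr)$ for each $y\in Z$, a subspace of $M(L)$ of finite---but $y$-dependent---dimension. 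To upgrade this to a uniform bound I partition $K$ into countably many pieces of the form $\{x\in K\colon |F^{m_s}_x|\leq N\}$ (more precisely, a countable Borel cover by such pieces), pull this cover back through the $f^s_i$, and reapply Proposition~\ref{SID} to obtain a strongly infinite-dimensional compactum $Z'\subseteq Z$ on which $\dim\Lin\bigl(\bigcup_i F^{m_s}_{f^s_i(y)}\bigr)$ is uniformly bounded by some fixed $N$. A final refinement allows me to arrange that a basis $(\mu_1(y),\ldots,\mu_N(y))$ of this span depends continuously on $y\in Z'$, which in turn yields continuous coefficient maps $c_1,\ldots,c_N\colon Z'\to\mathbb{R}$ with $\delta_y=\sum_j c_j(y)\mu_j(y)$.

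Assembling these ingredients gives a continuous map $\Psi\colon Z'\to K^{p_s}\times\mathbb{R}^N$ defined by $\Psi(y)=(f^s_1(y),\ldots,f^s_{p_s}(y),c_1(y),\ldots,c_N(y))$. It is injective because $\delta_y$ determines $y\in L$ and is reconstructible from $\Psi(y)$; since $Z'$ is compact and the target Hausdorff, $\Psi$ is a topological embedding. But $K$ being a compact metrizable $C$-space makes the finite power $K^{p_s}$ a compact metrizable $C$-space, and the compact image $\Psi(Z')$ lies in $K^{p_s}\times[-M,M]^N$ for some bound $M$, which is a product of compact metrizable $C$-spaces and therefore itself a $C$-space. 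Since the $C$-space property passes to closed subspaces, $Z'$ is a $C$-space, contradicting its strong infinite-dimensionality. The main technical obstacle is the refinement step: the assignment $x\mapsto F^m_x$ is not canonical, so showing that $K$ decomposes into countably many pieces of the Borel type required by Proposition~\ref{SID}, and arranging the continuous basis selection $y\mapsto(\mu_1(y),\ldots,\mu_N(y))$, is precisely the place where Proposition~\ref{SID} interacts subtly with the measure-theoretic output of Lemma~\ref{lemma} and with the $C$-space hypothesis on $K$.
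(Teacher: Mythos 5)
Your proposal agrees with the paper's proof up to the point where Proposition \ref{SID} produces a strongly infinite-dimensional compactum inside some $G_r$, but from there it takes a different route that has two genuine gaps, both located exactly where you flag ``the main technical obstacle.'' First, the sets $\{x\in K:\lvert F^m_x\rvert\leq N\}$ come from an arbitrary choice function: \eqref{e2} only asserts the \emph{existence} of some finite $F^m_x$ for each $x$, so $x\mapsto F^m_x$ carries no measurability, let alone continuity, and these pieces (and their preimages under the $f^s_i$) need not be $G_\delta$ or even Borel. Proposition \ref{SID} is stated, and proved, only for covers by $G_\delta$ sets, so you cannot reapply it to extract $Z'$. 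Second, and more fatally, the ``final refinement'' producing a weak*-continuous basis $y\mapsto(\mu_1(y),\ldots,\mu_N(y))$ with continuous coefficients $c_j(y)$ has no justification: the measures are drawn from the sets $F^{m_s}_{f^s_i(y)}$, which jump arbitrarily as $f^s_i(y)$ varies, so no selection theorem applies. Without the continuous coefficients the map $\Psi$ is not defined, and the endgame (embedding $Z'$ into the $C$-space $K^{p_s}\times[-M,M]^N$ and invoking heredity to closed subspaces, which would otherwise be sound) never gets off the ground.

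The paper avoids both problems by never asking for any regularity of $x\mapsto F^m_x$. It forms the diagonal map $f=\bigtriangleup_{i\leq p_r}(f^r_i\upharpoonright Q):Q\to K^{p_r}$ and uses the fact that a continuous map from a compactum that is not a $C$-space into a $C$-space (here $K^{p_r}$, by Rohm's product theorem) must have a fiber that is not zero-dimensional, hence an uncountable fiber $f^{-1}(x)$. On that single fiber $\alpha_{m,1}(y)=\{x_1,\ldots,x_{p_r}\}$ is \emph{constant}, so Lemma \ref{lemma} places the uncountably many Dirac measures $\delta_y$, $y\in f^{-1}(x)$, inside the span of the one fixed finite set $F_x=\bigcup_i F^m_{x_i}$. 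A Dirac measure in that span forces $y$ to be an atom of some $\mu\in F_x$, and finitely many measures have only countably many atoms --- a contradiction requiring no uniform dimension bound and no selection. If you want to salvage your argument, the missing idea is precisely this localization to a fiber on which the finite set of measures is literally the same for all $y$, replacing your attempted continuity of the assignment by its constancy.
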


\begin{proof}
Otherwise, there is a homeomorphism $\Phi:C_p(K)\to C_w([0,1]^\omega)$. Since function spaces are homogeneous, we can without loss of generality assume that
$\Phi$ takes the zero function on $K$ to the zero function on $[0,1]^\omega$. By Theorem \ref{partition} we have $[0,1]^\omega=\bigcup_{i\in\NN}G_r$,
where each $G_r$ is a $G_\delta$ subset of $[0,1]^\omega$ and
for every $r\in \NN$, there are continuous mappings $f^r_i:G_r\rightarrow K$, $i=1,\ldots,p_r$, such that
$\alpha_{m,1}(y)=\{f^r_1(y),\ldots,f^r_{p_r}(y)\}$
for $y\in G_r$.

By Proposition \ref{SID} there is $r\in \NN$ such that $G_r$ contains a strongly infinite-dimensional compactum $Q\subseteq G_r$.

Let $f=\bigtriangleup_{i\leq p_r} (f^r_i\upharpoonright Q): Q\to K^{p_r}$ be the restriction to $Q$ of the diagonal mapping, i.e.
$f(y)=(f^r_1(y),\ldots , f^r_{p_r}(y)),$ for $y\in Q\subseteq G_r$.

Since $K^{p_r}$ is a $C$-space (cf. \cite{R}) and $Q$ is not, not all fibers of $f$ are zero-dimensional (in fact not all of them are $C$-spaces),
cf. \cite[5.4]{Fe}.
Hence, there is $x=(x_1,\ldots,x_{p_r})\in K^{p_r}$ such that $f^{-1}(x)$ is uncountable. Note that for any $y\in f^{-1}(x)$ we have
$\alpha_{m,1}(y)=\{x_1,\ldots, x_{p_r}\}$.
Consider $$F_x=\bigcup_{i=1}^{p_r} F_{x_i}^m.$$
Obviously this set is finite.
For $\mu\in M(L)$ let us put $A_\mu=\{y\in L: \mu(\{y\})\neq 0\}$.
For each $\mu\in M(L)$ the set $A_\mu$ is countable being the set of atoms of a measure.
From Lemma \ref{lemma} it follows that for each $y\in f^{-1}(x)$ there is $\mu\in F_x$ such that $y\in A_\mu$.
This means that
$$f^{-1}(x)\subseteq \bigcup_{\mu\in F_x} A_\mu.$$
However, the latter set is countable and thus cannot cover the uncountable fiber $f^{-1}(x)$, a contradiction.
\end{proof}

Combining the above theorem with the Miljutin's theorem \cite{Mi} we get the following.

\begin{corollary}\label{col}
If $K$ is an uncountable compact metrizable $C$-space then $C_w(K)$ and $C_p(K)$ are not homeomorphic.
\end{corollary}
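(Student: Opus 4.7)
The plan is to derive the corollary as an immediate consequence of the preceding theorem combined with Miljutin's theorem, which was recalled in the introduction. The overall strategy is a short contradiction argument: transport any hypothetical homeomorphism $C_w(K)\to C_p(K)$ across Miljutin's linear homeomorphism to obtain a homeomorphism from $C_w([0,1]^\omega)$ onto $C_p(K)$, which is forbidden by the theorem.

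In detail, I would suppose towards a contradiction that $C_w(K)$ and $C_p(K)$ are homeomorphic. Since $K$ is an uncountable compact metrizable space and $[0,1]^\omega$ is also an uncountable compact metrizable space, Miljutin's theorem provides a linear homeomorphism $T:C_w([0,1]^\omega)\to C_w(K)$. Composing $T$ with the assumed homeomorphism $C_w(K)\to C_p(K)$ produces a homeomorphism $C_w([0,1]^\omega)\to C_p(K)$, equivalently a homeomorphism $\Phi:C_p(K)\to C_w([0,1]^\omega)$. But $K$ is by hypothesis a compact metrizable $C$-space, so the preceding theorem asserts that no such homeomorphism can exist, giving the desired contradiction.

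Since the entire content has already been packaged into the preceding theorem and Miljutin's theorem, there is essentially no technical obstacle left. The only point worth a second of care is that the statement of the theorem is stated without the qualifier ``uncountable'' on $K$, so one must verify that it still applies in the uncountable setting assumed in the corollary — which is automatic, since ``uncountable compact metrizable $C$-space'' is a special case of ``compact metrizable $C$-space''. No additional lemmas or hypotheses are required.
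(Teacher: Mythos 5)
Your proposal is correct and follows exactly the paper's route: the paper derives the corollary by combining the preceding theorem with Miljutin's theorem, precisely as you do. Your extra remark about the theorem not requiring uncountability is also fine, since an infinite (indeed uncountable) compact metrizable $C$-space is in particular a compact metrizable $C$-space.
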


In particular, the above corollary covers the important case of all uncountable finite-dimensional compacta. 

\section*{Open questions}

Though Corollary \ref{col} is quite general, our method does not work for all uncountable metrizable compacta.
Thus we do not know the answer to the following basic question mentioned in the Introduction.

\begin{question}
Suppose that $K$ is an uncountable compact metrizable space (which is not a $C$-space). Is it true that $C_p(K)$ and $C_w(K)$ are not homeomorphic?
\end{question}

It seems that the most interesting particular case of the above question is the following:

\begin{question}
Is it true that $C_p([0,1]^\omega)$ and $C_w([0,1]^\omega)$ are not homeomorphic?
\end{question}

Although we have the proof that, for example, $C_w(2^\omega)$ and $C_p(2^\omega)$ are not homeomorphic our method seems to be fairly complicated.
Moreover it does not provide any topological property distinguishing $C_p(2^\omega)$ and $C_w(2^\omega)$. Thus the following problem seems to
be interesting.
\begin{problem}
Find a topological property distinguishing $C_p(2^\omega)$ and $C_w(2^\omega)$. Find a topological property distinguishing $C_p([0,1])$ and $C_w([0,1])$.
\end{problem}

It is reasonable to ask also what happens outside the metrizable case:

\begin{problem}
Is it true that $C_p(K)$ and $C_w(K)$ are not homeomorphic for any infinite compact space $K$? 
\end{problem}

\subsection*{Acknowledgments}

The author is indebted to Witold Marciszewski for valuable comments and remarks.
This research was partially supported by the Polish National Science Center research grant UMO-2012/07/N/ST1/03525


\begin{thebibliography}{99}

%\bibitem{AK}
%F.\ Albiac, N.J.\ Kalton {\em Topics in Banach space theory},
%Graduate Texts in Mathematics 233, Springer, New York, 2006. 

\bibitem{C}
R.\ Cauty, {\em Sur l'invariance de la dimension infinie forte par t-\'{e}quivalence} (French), Fund. Math. 160 (1999), no. 1, 95--100.

\bibitem{F}
M.\ Fabian, P.\ Habala, P.\ H\'ajek, V.\ Montesinos~Santalucía, J.\ Pelant, V.\ Zizler,
{\em Functional analysis and infinite-dimensional geometry}
CMS Books in Mathematics/Ouvrages de Math\'ematiques de la SMC, 8. Springer-Verlag, New York, 2001.

\bibitem{Fe}
V.V.\ Fedorchuk, {\em Some classes of weakly infinite-dimensional spaces}, J. Math. Sci. (N. Y.) 155 (4) (2008) 523--570.

\bibitem{K}
M.\ Krupski, {\em On the $t$-equivalence relation},  Topology Appl. 160 (2013), no. 2, 368--373.

\bibitem{M}
W.\ Marciszewski, {\em On properties of metrizable spaces $X$ preserved by t-equivalence}, Mathematika 47 (2000), 273--279.

\bibitem{Mi}
A.A. Miljutin, {\em Isomorphism of the spaces of continuous functions over compact sets of the cardinality of the continuum} (Russian)
Teor. Funkcii Funkcional. Anal. i Prilo\v{z}en. Vyp. 2 (1966) 150--156.

\bibitem{O}
O.\ Okunev, {\em A relation between spaces implied by their t-equivalence}, Topology Appl. 158 (2011), 2158--2164.

\bibitem{R}
D.\ Rohm, {\em Products of infinite-dimensional spaces}, Proc. Amer. Math. Soc. 108 (4) (1990) 1019--1023.

\bibitem{S}
H.H.\ Schaefer, {\em Topological vector spaces}, Graduate Texts in Mathematics 3, Springer-Verlag, New York, Heidelberg, Berlin, 1971.
 
\bibitem{vM}
J.\ van\ Mill, {\em The Infinite-Dimensional Topology of Function Spaces}, North-Holland Mathematical Library 64, North-Holland, Amsterdam, 2001.

\end{thebibliography}
\end{document}